\newcommand{\na}{\mathbb{N}}
\newcommand{\re}{\mathbb{R}}
\newcommand{\R}{\mathbb{R}}
\newcommand{\norm}[2]{\|#1\|_{#2}}
\newcommand{\ve}{\varepsilon}
\DeclareMathOperator{\supp}{supp}
\newtheorem{theorem}{Theorem}[section]
\theoremstyle{definition}
\newtheorem{definition}[theorem]{Definition}
\newtheorem{example}[theorem]{Example}
\theoremstyle{remark}
\newtheorem{remark}[theorem]{Remark}
\numberwithin{equation}{section}
\begin{document}

\title[Orlicz Sobolev Inequalities and the Doubling Condition]{Orlicz Sobolev Inequalities and the Doubling Condition}


\author[Korobenko]{Lyudmila Korobenko}
\address{Reed College\\
Portland, Oregon\\
korobenko@reed.edu}

\subjclass[2010]{35J70, 35J60, 35B65, 46E35, 31E05, 30L99}
\keywords{Orlicz spaces, Sobolev inequality, metric measure spaces, doubling condition, non-doubling measure}

\date{}

\dedicatory{}

\begin{abstract} In \cite{KMR} it has been shown that $(p,q)$ Sobolev inequality with $p>q$ implies the doubling condition on the underlying measure. We show that even weaker Orlicz-Sobolev inequalities, where the gain on the left-hand side is smaller than any power bump, imply doubling. Moreover, we derive a condition on the quantity that should replace the radius on the righ-hand side (which we call `superradius'), that is necessary to ensure that the space can support the Orlicz-Sobolev inequality and simultaneously be non-doubling. 

\end{abstract}

\maketitle

\section{Introduction}\label{sec:intro}

There has been a lot of interest in the theory of Sobolev-type inequalities in metric spaces in the past two decades, and it has seen great developments, see for instance \cite{BB11, Fr1, Fr2, Haj,Haj2, Shan} and references therein. A special interest in
Sobolev-type inequalities arises in the study of regularity of solutions to certain
classes of degenerate elliptic and parabolic PDEs, see for instance \cite[Chapters 7-14]{BB11}, \cite{BM, DJS13, KS, KRSSh, MM} and references therein. In the classical case of an elliptic operator, the Moser or DeGiorgi iteration
technique can be used together with a Sobolev inequality to obtain higher regularity
of solutions \cite{Mos, DeG}. In the subelliptic case, a somewhat similar technique can be used
to obtain H\"{o}lder continuity of weak solutions \cite{FKS82,SW1}. In this case, the Sobolev
inequality on metric balls is used together with a certain accumulating sequence of
Lipschitz cutoff functions to perform the Moser iteration.

In most literature, the doubling condition is central in both the
process of performing the Moser iteration, and in the proof of Sobolev inequality
itself. This condition provides a homogeneous space structure, which makes it
possible to adapt many classical tools available in the Euclidean space \cite{Haj, Shan}. A somewhat surprising result \cite{KMR} says that a classical $(p,q)$ Sobolev inequality with $p>q$ actually implies the doubling condition on the underlying measure.
It is known, however, that there are versions of Sobolev inequality for which the doubling condition
is not necessary. For example, some versions of
Sobolev inequality have been established for lower Ahlfors regular spaces \cite{Miz} under the assumption of a weak Poincar\'{e} type inequality, as well as logarithmic Sobolev inequalities for Gaussian measure \cite{Gro}.

A more general, weaker, version of Sobolev inequality (an Orlicz-Sobolev inequality) has been recently proved for certain non-doubling metric measure spaces, and then successfully applied in the DeGiorgi iteration scheme to prove regularity of solutions to infinitely degenerate elliptic equations \cite{KRSSh}. This poses a question of what types of Sobolev inequalities do imply the doubling condition. In this paper we prove that a Sobolev inequality with a sufficiently large Orlicz bump does also imply the doubling condition. We also derive a lower bound on the quantity that should be put on the right-hand side of the Orlicz-Sobolev inequality in place of the radius, so that it does not imply the doubling condition.

The paper is organized as follows. In Section \ref{main} we give a brief overview of Sobolev spaces on metric spaces, and state our first main result, that a version of Orlicz-Sobolev inequality implies the doubling condition on the measure. Section \ref{sec:proofThm} is dedicated to the proof of this result, Theorem \ref{thm:Sob=>doublingMetric}. In Section \ref{sec:super} we derive a lower bound on the superradius, necessary for the measure to be non-doubling. Finally, the last section contains an explicit calculation of the superradius in the setting of subunit metric spaces.

\section{Orlicz-Sobolev implies doubling}\label{main}
We first briefly review the theory of Sobolev spaces in metric spaces, the reader is referred to \cite[Chapters 1-5]{BB11}, \cite[Sections 1-5]{BM}, \cite[Sections 1-3]{KS} for further details.

Let $(X,d)$ be a metric space. For $y \in X$ and $R> 0$ the $d$-ball centered at $y$ with radius $R$ is defined as $B_d(y,R):=\{x \in X: d(x,y) < R\}$.

Given a function $u: X \rightarrow [-\infty, \infty]$, a non-negative Borel function $g: X\rightarrow [0,\infty]$ is called an \emph{upper gradient} of $u$ if for all curves (i.e. non-constant rectifiable continuous mappings) $\gamma : [0, l_\gamma] \rightarrow X$ it holds
$$
|u(\gamma(0)) - u(\gamma (l_\gamma))| \leq \int_\gamma g \, ds.
$$
In particular, if for some $L \geq 1$, $u: X \rightarrow \re$ is an \emph{$L$-Lipschitz function}, i.e., $|u(x)-u(y)| \leq L \, d(x,y)$ for every $x,y\in X$, then the function $lip(u)$ defined for $x \in X$ as
\begin{equation}\label{deflipu}
lip(u)(x):= \liminf_{r \to 0^+} \sup\limits_{y \in B_d(x,r)} \frac{|u(x)-u(y)|}{r}
\end{equation}
is an upper-gradient for $u$ (see \cite[Proposition 1.14]{BB11}). Notice also that $lip(u)(x) \leq L$ for every $x \in X$.

Let $\mu$ be a Borel measure on $(X,d)$ such that $0 < \mu(B) < \infty$ for every $d$-ball $B \subset X$. For $1 \leq p < \infty$ and $u \in L^p(X, \mu)$ set
$$
\norm{u}{N^{1,p}}^p:= \int_X |u|^p d\mu + \inf\limits_{g} \int_X g^p d\mu,
$$
where the infimum is taken over all the upper-gradients $g$ of $u$. Given a $d$-ball $B \subset X$, its Newtonian space with zero boundary values is defined as
$$
N_0^{1,p}(B):=\{f|_B : \norm{f}{N^{1,p}} < \infty \text{ and } f\equiv 0 \text{ on } X \setminus B\}.
$$

First, we recall a classical Sobolev inequality

\begin{definition}
	Let $(X, d, \mu)$ be as above. Given $1\leq p < \infty$ and $1< \sigma <\infty$, we say that the triple $(X, d, \mu)$ admits a \emph{weak $(p\sigma, p)$-Sobolev inequality} with a (finite) constant $C_S > 0$ if for every $d$-ball $B:=B_d(y,r) \subset X$ and every function $w\in N_0^{1,p}(B)$ it holds true that
	\begin{align}\label{sob_classic}
	\left\Vert w\right\Vert _{L^{p\sigma }\left( \mu_{B}\right) }\leq C
	r\left( B\right) \left\Vert g\right\Vert _{L^{p}\left(\mu_{B}\right)}
	\end{align}
	for all upper-gradients $g$ of $w$.
\end{definition}
In the above definition and everywhere below we use the notation 
$$
d\mu_{B}:=\frac{d\mu}{\mu(B)}.
$$ 
It has been proved in \cite{KMR} that the above Sobolev inequality implies that the measure $\mu$ is doubling on $(X, d)$. We are now interested in the question whether some weaker versions of Sobolev inequality also imply the doubling property of the measure. We first show by a simple counter example, that $(p,p)$-Sobolev inequality does not necessarily imply doubling.
\begin{example}
	In $\re^2$ consider the operator $L$ defined as $L=div A\nabla$ with $A=A(x,y)=diag\{1,e^{1/|x|}\}$, $(x,y)\in\re^2$. Let $d$ be a subunit metric associated to $L$ (see \cite{KRSSh} for definitions), and $\mu=|\  |$ the Lebesgue measure. Then it was shown in \cite{KRSSh} that the measure of a metric ball centered at the origin satisfies the following estimate
	\[
	|B(0,r)|\approx r^4 e^{-\frac{1}{r}}
	\]
	which is easily seen to be non-doubling. On the other hand, if $w\in W^{1,p}_{0}(B)$ then
	\[
	|w(x,y)|=\left\vert\int_{-\infty}^{x}\frac{\partial}{\partial t}w(t,y)dt\right\vert \leq \int_{-\infty}^{\infty}\left\vert\frac{\partial}{\partial t}w(t,y)\right\vert dt\leq \int_{-\infty}^{\infty}\left\vert\nabla w(t,y)\right\vert dt
	\]
	Raising to the power $p$ and integrating we obtain
	\[
	\iint_{B}|w|^p d\mu\leq Cr^{p}\iint_{B}|\nabla w|^p d\mu
	\]
	where we used H\"{o}lder inequality and the fact that $w(x,y) =0$ for $|x|>r$. 
	\end{example}

We now look for a stronger form of Sobolev inequality weaker than (\ref{sob_classic}) that would still imply the doubling property of the measure. Roughly speaking we want to put a ``bump'' in the norm on the left-hand side, smaller than any power bump. One natural class of function spaces to consider is the class of Orlicz spaces. We now give the relevant definitions. Suppose that $\mu $ is a $%
\sigma $-finite measure on a set $X$, and $\Phi :\left[ 0,\infty \right)
\rightarrow \left[ 0,\infty \right) $ is a Young function, which for our
purposes is a convex piecewise differentiable (meaning there are at most
finitely many points where the derivative of $\Phi $ may fail to exist, but
right and left hand derivatives exist everywhere) function such that $\Phi
\left( 0\right) =0$ and

\begin{equation*}
\frac{\Phi \left( x\right) }{x}\rightarrow \infty \text{ as }x\rightarrow
\infty, \quad\text{and}\quad
\frac{\Phi \left( x\right) }{x}\rightarrow 0 \text{ as }x\rightarrow
0.
\end{equation*}%
We also note here that from the assumptions on $\Phi$ it follows that the function
\begin{equation*}
\Psi(x):=\frac{\Phi(x)}{x}
\end{equation*}
is increasing and satisfies
\begin{equation*}
\Psi(x)\rightarrow \infty \text{ as }x\rightarrow
\infty, \quad\text{and}\quad
\Psi(x)\rightarrow 0 \text{ as }x\rightarrow
0.
\end{equation*}%
Let $L_{\ast }^{\Phi }$ be the set of measurable functions $f:X\rightarrow 
\mathbb{R}$ such that the integral%
\begin{equation*}
\int_{X}\Phi \left( \left\vert f\right\vert \right) d\mu ,
\end{equation*}%
is finite, where as usual, functions that agree almost everywhere are
identified. Since the set $L_{\ast }^{\Phi }$ may not be closed under scalar
multiplication, we define $L^{\Phi }$ to be the linear span of $L_{\ast
}^{\Phi }$, and then define%
\begin{equation}\label{norm_def}
\left\Vert f\right\Vert _{L^{\Phi }\left( \mu \right) }\equiv \inf \left\{
k\in \left( 0,\infty \right) :\int_{X}\Phi \left( \frac{\left\vert
	f\right\vert }{k}\right) d\mu \leq 1\right\} .
\end{equation}%
The Banach space $L^{\Phi }\left( \mu \right) $ is precisely the space of
measurable functions $f$ for which the norm $\left\Vert f\right\Vert
_{L^{\Phi }\left( \mu \right) }$ is finite. 
\begin{definition}
Let $(X, d, \mu)$ be as above. Given a Young function $\Phi$, we say that the triple $(X, d, \mu)$ admits an \emph{Orlicz-Sobolev inequality} with a $\Phi$ bump and a (finite) constant $C_S > 0$ if for every $d$-ball $B:=B_d(y,r) \subset X$ and every function $w\in N_0^{1,1}(B)$ it holds true that
\begin{equation}\label{MetricSob}
\left\Vert w\right\Vert _{L^{\Phi }\left( \mu_{B}\right) }\leq C
r\left( B\right) \left\Vert g\right\Vert _{L^{1}\left(\mu_{B}\right)},\ \ \ \ \ \supp w\subset B,
\end{equation}
for all upper-gradients $g$ of $w$.
\end{definition}

Here is our first main result

\begin{theorem}\label{thm:Sob=>doublingMetric} Suppose that the triple $(X, d, \mu)$ admits an Orlicz-Sobolev inequality (\ref{MetricSob}). Assume also that the function $\Phi$ satisfies
	\begin{equation}\label{phi_cond1}
	\Phi(t)\geq t\left(\ln t\right)^{\alpha},\quad \forall t>1,
	\end{equation}
	for some $\alpha>1$.
Then, the measure $\mu$ is doubling on $(X, d)$. More precisely, there exists a constant $C_D \geq 1$, depending only on $\alpha$ and $C_S$, such that
\begin{equation*}
\mu(B_d(y,2r)) \leq C_D \, \mu(B_d(y,r)) \quad \forall y \in X, r >0.
\end{equation*}
\end{theorem}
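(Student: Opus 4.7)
Plan. The approach is to apply the Orlicz-Sobolev inequality to a single well-chosen radial test function and then iterate the resulting three-scale inequality across dyadic radii. Throughout, I fix $y \in X$ and write $V(\rho):=\mu(B_d(y,\rho))$.

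First I would use the radial tent $w(x)=(r-d(x,y))_+$, which is supported in $B_d(y,r)$, satisfies $\|w\|_\infty=r$, and admits the upper gradient $g=\chi_{B_d(y,r)}$, so that $w\in N_0^{1,1}(B_d(y,2r))$. Applying (\ref{MetricSob}) on the ball $B=B_d(y,2r)$ gives the upper bound
\[
\|w\|_{L^{\Phi}(\mu_B)} \leq 2C_S r\cdot \frac{V(r)}{V(2r)}.
\]
Since $w\geq r/2$ on $B_d(y,r/2)$, the Luxemburg norm definition (\ref{norm_def}) supplies the matching lower bound
\[
\|w\|_{L^{\Phi}(\mu_B)} \geq \frac{r/2}{\Phi^{-1}\bigl(V(2r)/V(r/2)\bigr)},
\]
and combining these two estimates produces the \emph{three-scale inequality}
\[
\Phi\!\left(\frac{V(2r)}{4C_S V(r)}\right) \leq \frac{V(2r)}{V(r/2)}.
\]

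Next, if doubling fails appreciably at scale $r$ in the sense that $V(2r) > 4 C_S V(r)$, the hypothesis $\Phi(t)\geq t(\ln t)^\alpha$ rearranges the three-scale inequality into
\[
\frac{V(r)}{V(r/2)} \geq \frac{1}{4C_S}\!\left(\ln \frac{V(2r)}{4C_S V(r)}\right)^{\!\alpha}.
\]
Thus a large doubling ratio at scale $r$ forces a (smaller but still substantial) doubling ratio at scale $r/2$, and iterating this recursion down through dyadic scales $r,r/2,r/4,\dots$ yields a whole sequence of doubling-ratio lower bounds starting from any presumed failure of doubling at scale $r_0$.

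The main obstacle is closing this iteration into an actual contradiction. A single application of (\ref{MetricSob}) is consistent with $V(r_0/2^k)\to 0$ at any rate, so the three-scale inequality alone does not bound the doubling ratio uniformly. The subtle role of the hypothesis $\alpha>1$---the same growth that makes $\sum k^{-\alpha}$ summable---is to ensure that the accumulated product $V(r_0)/V(r_0/2^k)$ grows incompatibly with a further consequence of the Orlicz-Sobolev inequality (for instance, one obtained by feeding a multi-scale test function, such as a sum of tents at dyadic radii, into (\ref{MetricSob})). This final closure, rather than the routine derivations of the previous two paragraphs, is the heart of the argument and the place where the exponent $\alpha$ must enter quantitatively to produce the doubling constant $C_D$ in terms of $\alpha$ and $C_S$.
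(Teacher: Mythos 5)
Your first two steps are correct: the tent function, the upper bound from \eqref{MetricSob} on $B_d(y,2r)$, the Luxemburg-norm lower bound from $w\geq r/2$ on $B_d(y,r/2)$, and the resulting three-scale inequality are all valid (they amount to the first step of the paper's scheme with a cruder cutoff). But the iteration you then propose -- descending through dyadic scales $r,r/2,r/4,\dots\to 0$ -- cannot be closed, and your last paragraph is an acknowledgment that the proof is missing its main ingredient rather than a proof of it. Concretely, the recursion only passes an iterated logarithm of the doubling ratio from one scale to the next ($\ln$ of a quantity that is itself already a power of $\ln$ of the previous ratio), so the lower bounds degenerate after two steps; and more fundamentally, nothing in the hypotheses prevents $V(\rho)$ from decaying arbitrarily fast as $\rho\to 0$, so no contradiction can be extracted by following the scales down to zero. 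The suggested repair via ``a sum of tents at dyadic radii'' does not work either, because the gain from $\Phi$ acts on each ball's normalized measure separately and the small balls' measures are uncontrolled from below. Your heuristic that $\alpha>1$ enters through summability of $\sum k^{-\alpha}$ is also not how the hypothesis is actually used.

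The missing idea is to iterate \emph{inward toward the fixed radius $r/2$}, not toward $0$. Take radii $r_1=r$, $r_j\downarrow r/2$ with $r_j-r_{j+1}=c\,r/j^{\gamma}$ for some $\gamma\in(1,\alpha)$, cutoffs $\psi_j$ equal to $1$ on $B_{j+1}:=\{d(\cdot,y)\leq r_{j+1}\}$ and supported in $B_j$, with upper gradient $\lesssim j^{\gamma}r^{-1}\chi_{B_j}$. Applying \eqref{MetricSob} on $B^*=B_d(y,2r)$ to each $\psi_j$ and arguing as you did at a single scale yields
\begin{equation*}
\Phi\!\left(P_j\right)\leq \frac{\mu(B^*)}{\mu(B_{j+1})},\qquad P_j:=\frac{\mu(B^*)}{\widetilde{C}_S\,j^{\gamma}\mu(B_j)},
\end{equation*}
hence $P_{j+1}\geq P_j(\ln P_j)^{\alpha}/\bigl(\widetilde{C}_S(j+1)^{\gamma}\bigr)$. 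If $P_1$ exceeds a constant depending only on $\alpha,\gamma,\widetilde{C}_S$, induction gives $P_j\geq P_1e^{j-1}\to\infty$, because $\gamma<\alpha$ makes $(\ln P_1+j-1)^{\alpha}/(j+1)^{\gamma}\to\infty$ -- this is where $\alpha>1$ truly enters. On the other hand every $B_j$ contains $\frac{1}{2}B$, whose measure is positive and fixed, so $P_j\leq \mu(B^*)/\bigl(\widetilde{C}_S j^{\gamma}\mu(\frac{1}{2}B)\bigr)\to 0$: a contradiction. Therefore $P_1$, and with it $\mu(2B)/\mu(B)$, is bounded by a constant depending only on $\alpha$ and $C_S$. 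It is precisely the uniform lower bound $\mu(B_j)\geq\mu(\frac{1}{2}B)>0$, available only because the radii accumulate at $r/2$ rather than at $0$, that closes the argument.
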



\section{Proof of Theorem \ref{thm:Sob=>doublingMetric}}\label{sec:proofThm}

Given a $d$-ball $B:=B_d(y,r)$ set $B^*:=B_d(y,2r)$ and define a family of $d$-Lipschitz functions $\{\psi_j\}_{j \in \na} \subset N_0^{1,1}(B) \subset N_0^{1,1}(B^*)$ as follows: for $j \in \na$ set $r_{1}=r$, $\lim_{j\rightarrow \infty }r_{j}=\frac{1}{2}r$, $r_{j}-r_{j+1}=\frac{c}{j^{\gamma }}r$ for a uniquely determined constant $c$ and $\gamma >1$
 and
\begin{equation}\label{defpsijMetric}
\psi_j(x):= \left(\frac{r_j - d(x,y)}{r_j - r_{j+1}} \right)^+ \wedge 1.
\end{equation}
Also for $j \in \na$, define the $d$-balls $B_j$ as
$$
\frac{1}{2}B \subset B_j:= \{x \in X: d(x,y) \leq r_j\} \subset B \subset B^*=2B.
$$
Our first step will be to apply the Orlicz-Sobolev inequality \eqref{MetricSob} to $\psi_j$ on $B^*$ by choosing the upper-gradient $g_j:=lip(\psi_j)$ as defined in \eqref{deflipu}. In particular, it follows that
\begin{equation}\label{condpsij}
g_j(x) \leq \frac{1}{c}\frac{j^{\gamma}}{r} \chi_{B_j}(x) \quad \text{and} \quad 0 \leq \psi_j(x) \leq 1 \quad \forall x \in X.
\end{equation}
Then, by the Orlicz-Sobolev inequality \eqref{MetricSob}  applied to each $\psi_j$ on $B^*=2B$ (and using the fact that each $\psi_j$ is supported in $B_j$, so that $B^*$ can be replaced by $B_j$ in the integrals), we obtain
\begin{equation}\label{Sobappliedtopsij}
\left\Vert \psi_j\right\Vert _{L^{\Phi }\left( \mu_{B^*}\right) }\leq C_Sr\left( B\right) \left\Vert g\right\Vert _{L^{1}\left(\mu_{B^*}\right)}\\
=2C_Sr\int_{B_j}g_j\frac{d\mu}{\mu(B^*)}\leq \frac{2C_S}{c}j^{\gamma}\frac{\mu(B_j)}{\mu(B^*)}
\end{equation}
where in the last inequality we used (\ref{condpsij}). We now denote $\widetilde{C}_S=2C_S/c$ and note that this constant depends only on $C_S$ and $\gamma$. Using definition (\ref{norm_def}) the norm bound (\ref{condpsij}) implies
$$
\int\Phi\left(\frac{\psi_j}{\widetilde{C}_Sj^{\gamma}\frac{\mu(B_j)}{\mu(B^*)}}\right)\frac{d\mu}{\mu(B^*)}\leq 1.
$$
From (\ref{defpsijMetric}) we then have $\psi_j=1$ on $B_{j+1}$ and thus
\begin{equation}\label{iter_setup}
\Phi\left(\frac{1}{\widetilde{C}_Sj^{\gamma}\frac{\mu(B_j)}{\mu(B^*)}}\right)\leq \frac{\mu(B^*)}{\mu(B_{j+1})}.
\end{equation}
Using the notation
\[
P_j:=\frac{\mu(B^*)}{\widetilde{C}_Sj^{\gamma}\mu(B_j)},
\]
and condition (\ref{phi_cond1}) we obtain from (\ref{iter_setup})
\[
P_{j+1}\geq P_{j}\frac{(\ln P_{j})^{\alpha}}{\widetilde{C}_S(j+1)^{\gamma}}.
\]
We now show by induction that $P_j\geq P_1 e^{j-1}$ $\forall j\geq 1$, provided $P_1$ is sufficiently large depending on $\widetilde{C}_S$. The base case $j=1$ is trivially true. Assume $P_j\geq P_1 e^{j-1}$ for some $j\geq 1$, then
\[
P_{j+1}\geq P_1 e^{j}\frac{(\ln P_1+j-1)^{\alpha}}{e \widetilde{C}_S(j+1)^{\gamma}}.
\]
Since $\alpha>1$ we can choose $1<\gamma<\alpha$ so there holds
$$
\frac{(\ln P_1+j-1)^{\alpha}}{e \widetilde{C}_S(j+1)^{\gamma}}\to \infty\quad\text{as } j\to \infty.
$$
Thus we can arrange 
\[
\frac{(\ln P_1+j-1)^{\alpha}}{e \widetilde{C}_S(j+1)^{\gamma}}\geq 1\  \  \forall j\geq 1
\]
by choosing $P_1\geq C(\widetilde{C}_S,\alpha,\gamma)$. This would give $P_j\to\infty$ as $j\to \infty$, which is a contradiction since
\[
P_j=\frac{\mu(B^*)}{\widetilde{C}_Sj^{\gamma}\mu(B_j)}\leq \frac{\mu(B^*)}{\widetilde{C}_Sj^{\gamma}\mu(1/2B)}\to 0\quad\text{as } j\to \infty.
\]
Therefore, it must be that
\[
P_1\leq C(\widetilde{C}_S,\alpha,\gamma) \implies \frac{\mu(B^*)}{\mu(B_1)}= \frac{\mu(2B)}{\mu(B)}\leq C,
\]
so the measure $\mu$ is doubling.

\begin{remark}
	Note that we cannot conclude the doubling property without imposing the condition $\Phi(t)\geq t(\ln t)^{\alpha}$, $\alpha>1$. In particular, non-doubling measure metric spaces may support Orlicz-Sobolev inequalities with ``weak enough'' bumps, e.g. log-Sobolev inequalities with Gaussian measure \cite{Gro} which correspond to $\Phi(t)= t\ln t$.
\end{remark}
\section{Superradius estimates}\label{sec:super}
We now introduce a weaker version of Orlicz-Sobolev inequality

\begin{definition}
	Let $(X, d, \mu)$ be as before. Given a Young function $\Phi$, we say that the triple $(X, d, \mu)$ admits a \emph{weak Orlicz-Sobolev inequality} with superradius $\varphi(r)$ and a (finite) constant $C_S > 0$ if for every $d$-ball $B:=B_d(y,r) \subset X$ and every function $w\in N_0^{1,1}(B)$ it holds true that
	\begin{equation}\label{weakMetricSob}
	\left\Vert w\right\Vert _{L^{\Phi }\left( \mu_{B}\right) }\leq C_S
	\varphi(r)\left( B\right) \left\Vert g\right\Vert _{L^{1}\left(\mu_{B}\right)},\ \ \ \ \ \supp w\subset B,
	\end{equation}
	for all upper-gradients $g$ of $w$. Here $\varphi(r)$ is nondecreasing and $\varphi(r)\geq r$.
\end{definition}
We would now like to derive an estimate on the superradius $\varphi(r)$ that is necessary for the space $(X, d, \mu)$ to simultaneously support a weak Orlicz-Sobolev inequality (\ref{weakMetricSob}) and allow the measure to be non-doubling.
\begin{theorem}\label{thm:Sob=>superradius} Suppose that the triple $(X, d, \mu)$ admits a weak Orlicz-Sobolev inequality (\ref{weakMetricSob}) with superradius $\varphi(r)$. Assume also that the function $\Phi$ satisfies
	\begin{equation}\label{phi_cond}
	\Phi(t)\geq t\left(\ln t\right)^{\alpha},\quad \forall t>1,
	\end{equation}
	for some $\alpha>1$.
	Then for every $\ve>0$ the superradius $\varphi$ satisfies
	\begin{equation}\label{super_est}
	\frac{\varphi(r(B))}{r(B)}\geq C_\ve\left(\ln\left[\frac{\mu(B)}{\mu\left(\frac{1}{2}B\right)}\right]\right)^{\alpha-1-\ve},
	\end{equation}
	where the constant $C_{\ve}>0$ depends on $\ve$, $\alpha$ and $C_S$.
	In particular, if $\varphi(r)=r$, the measure $\mu$ is doubling on $(X, d)$.
\end{theorem}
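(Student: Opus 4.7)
The plan is to rerun the iteration scheme from the proof of Theorem \ref{thm:Sob=>doublingMetric}, but to track how the superradius $\varphi$ propagates through the argument rather than to aim directly for a contradiction. Concretely, use the same cutoffs $\psi_j$ on $B^*=2B$ with $r_j-r_{j+1}=cj^{-\gamma}r$ and apply the weak Orlicz-Sobolev inequality (\ref{weakMetricSob}) in place of (\ref{MetricSob}); the only change in (\ref{Sobappliedtopsij}) is that the factor $r$ on the right is replaced by $\varphi(r(B^*))$. Setting $\lambda:=r/\varphi(2r)\in(0,1]$ and
\[
P_j:=\frac{\lambda}{\widetilde{C}_S}\cdot\frac{\mu(B^*)}{j^{\gamma}\mu(B_j)},
\]
the hypothesis $\Phi(t)\geq t(\ln t)^{\alpha}$ converts (\ref{iter_setup}) into the modified recursion
\[
P_{j+1}\geq \frac{\lambda}{\widetilde{C}_S(j+1)^{\gamma}}\,P_j(\ln P_j)^{\alpha}.
\]

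Next, I would attempt the same induction $P_j\geq P_1 e^{j-1}$ as in Theorem \ref{thm:Sob=>doublingMetric}; the inductive step now requires
\[
\frac{\lambda(\ln P_1+j-1)^{\alpha}}{e\widetilde{C}_S(j+1)^{\gamma}}\geq 1 \qquad\forall\, j\geq 1.
\]
For $1<\gamma<\alpha$ a one-variable calculus computation locates the minimum of $(\ln P_1+j-1)^{\alpha}/(j+1)^{\gamma}$ over $j\geq 1$ at an interior point $j^\ast\asymp\ln P_1$, with minimum value comparable, up to constants depending only on $\alpha$ and $\gamma$, to $(\ln P_1)^{\alpha-\gamma}$. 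Hence as soon as $(\ln P_1)^{\alpha-\gamma}\geq C(\alpha,\gamma,C_S)\lambda^{-1}$, the induction runs, $P_j\to\infty$, and one contradicts the still-valid upper bound $P_j\leq (\lambda/\widetilde{C}_S)\mu(B^*)/(j^{\gamma}\mu(\tfrac{1}{2}B))\to 0$. The contrapositive is the quantitative conclusion
\[
\ln P_1\leq C'(\alpha,\gamma,C_S)\,\lambda^{-1/(\alpha-\gamma)}.
\]

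Finally, unfolding $\ln P_1=\ln(\mu(B^*)/\mu(B))-\ln(\widetilde{C}_S/\lambda)$ and absorbing the harmless logarithmic term into constants yields
\[
\frac{\varphi(2r)}{r}\geq c(\alpha,\gamma,C_S)\left(\ln\frac{\mu(2B)}{\mu(B)}\right)^{\alpha-\gamma}.
\]
Choosing $\gamma=1+\ve$ for any $\ve>0$ produces the exponent $\alpha-1-\ve$; applying the inequality to the ball of radius $r/2$ (so that $\mu(2B)/\mu(B)$ is replaced by $\mu(B)/\mu(\tfrac{1}{2}B)$) and using monotonicity of $\varphi$ to pass from $\varphi(r/2)$ to $\varphi(r)$ at the cost of a factor $1/2$ delivers (\ref{super_est}). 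The ``in particular'' statement is then immediate: when $\varphi(r)=r$, the left-hand side equals $1$, forcing $\mu(B)/\mu(\tfrac{1}{2}B)$ to be bounded and hence $\mu$ doubling. The only step that is genuinely new compared with Theorem \ref{thm:Sob=>doublingMetric} is the minimization of $(\ln P_1+j-1)^{\alpha}/(j+1)^{\gamma}$, which I expect to be the main obstacle, though it is a short calculus exercise once $\gamma$ is chosen in $(1,\alpha)$.
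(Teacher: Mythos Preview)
Your proposal is correct and follows essentially the same route as the paper: identical cutoffs and recursion, the same induction ansatz $P_j\geq P_1e^{j-1}$, and the same contradiction, with $\gamma\in(1,\alpha)$ chosen close to $1$ at the end. The only technical difference is in verifying the inductive condition $\lambda(\ln P_1+j-1)^{\alpha}\geq e\widetilde{C}_S(j+1)^{\gamma}$ for all $j\geq 1$: the paper applies Young's inequality with exponents $\alpha/\gamma$ and $\alpha/(\alpha-\gamma)$ to the product $\widetilde{C}_S(r)^{1/\alpha}(j+1)^{\gamma/\alpha}$, arriving at the sufficient condition $\ln P_1\gtrsim \widetilde{C}_S(r)^{1/(\alpha-\gamma)}$, while you minimize $(\ln P_1+j-1)^{\alpha}/(j+1)^{\gamma}$ directly by calculus and obtain the same threshold; these are equivalent, since Young's inequality is sharp precisely at the minimizer you locate. (One small remark: when you apply the conclusion to the ball of radius $r/2$ you already obtain $\varphi(r)$ on the left, so the appeal to monotonicity of $\varphi$ is unnecessary.)
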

\begin{proof}
	Proceeding as in the proof of Theorem \ref{thm:Sob=>doublingMetric} we arrive at
	\begin{equation}\label{ineq_super}
	\Phi\left(\frac{1}{\widetilde{C}_S(r)j^{\gamma}\frac{\mu(B_j)}{\mu(B^*)}}\right)\leq \frac{\mu(B^*)}{\mu(B_{j+1})},
	\end{equation}
	where now 
	$$
	\widetilde{C}_S(r)=\frac{2C_S}{c}\frac{\varphi(2r)}{2r}.
	$$
The inequality we have for $P_j=\frac{\mu(B^*)}{\widetilde{C}_S(r)j^{\gamma}\mu(B_j)}$	is
\[
P_{j+1}\geq P_{j}\frac{(\ln P_{j})^{\alpha}}{\widetilde{C}_S(r)(j+1)^{\gamma}}.
\] 
The induction assumption $P_j\geq P_1e^{j-1}$ gives
\[
P_{j+1}\geq P_1 e^{j}\frac{(\ln P_1+j-1)^{\alpha}}{e \widetilde{C}_S(r)(j+1)^{\gamma}}.
\]
We now derive a condition on $P_1$ that will guarantee that $P_{j+1}\geq P_1 e^j$. We need
\begin{equation}\label{p1-cond}
\ln P_1+j-1\geq e^{1/\alpha} \widetilde{C}_S(r)^{1/\alpha}(j+1)^{\gamma/\alpha}.
\end{equation}
Choosing $1<\gamma<\alpha$ and using Young's inequality with $p=\frac{\alpha}{\gamma}$, $p'=\frac{\alpha}{\alpha-\gamma}$ we have
\[
e^{1/\alpha} \widetilde{C}_S(r)^{1/\alpha}(j+1)^{\gamma/\alpha}\leq \frac{\alpha-\gamma}{\alpha}(e\widetilde{C}_S(r))^{\frac{1}{\alpha-\gamma}}+\frac{\gamma}{\alpha}(j+1).
\]
To satisfy (\ref{p1-cond}) it is then sufficient to require
\[
\ln P_1\geq 2+ \frac{\alpha-\gamma}{\alpha}(e\widetilde{C}_S(r))^{\frac{1}{\alpha-\gamma}},
\]
and we then arrive at a required contradiction $P_j\to\infty$ as $j\to \infty$. The condition
\[
\widetilde{C}_S(r)\geq C(\alpha,\gamma)(\ln P_1)^{\alpha-\gamma}
\]
is thus necessary and is guaranteed by 
\[
\frac{\varphi(2r)}{2r}\geq C(C_S,\alpha,\gamma)\left(\ln\left[\frac{\mu(2B)}{\mu\left(B\right)}\right]\right)^{\alpha-\gamma}.
\]
Since $\gamma$ can be chosen arbitrarily close to $1$, this concludes (\ref{super_est}).

\end{proof}

\section{Example}
In this section we restate Theorem \ref{thm:Sob=>doublingMetric} in the setting of a metric measure space related to (degenerate) elliptic operators. We then consider a concrete example of Orlicz-Sobolev inequality in such a space, and calculate the quantities in (\ref{super_est}). This example suggests that the exponent on the right hand side of (\ref{super_est}) is not sharp and could be improved to $\alpha$.
\subsection{Subelliptic version of main theorem}
We follow the terminology and notation of \cite{KMR} (see also \cite[Section 1]{SW1}). Consider an open subset $\Omega\subset\mathbb{R}^n$ (in the Euclidean topology) and let
$$
Q : \Omega \rightarrow \{\text{non-negative semi-definite }  n \times n \text{ matrices}\}
$$
be a locally bounded function on $\Omega$. For a Lipschitz function $u : \Omega \rightarrow \re$ (throughout this subsection, Lipschitz means Lipschitz with respect to the Euclidean distance), define its $Q$-gradient Lebesgue-a.e. in $\Omega$ as
$$
[\nabla u]_Q:= ( \nabla u^T Q \nabla u)^{\frac{1}{2}}.
$$
Let $d$ be any metric on $\R^n$.
\begin{definition}[Standard sequence of accumulating Lipschitz functions]
	\label{def_cutoff}Let $\Omega $ be a bounded domain in $\mathbb{R}^{n}$. Fix 
	$r>0$ and $x\in \Omega $. We define a $\left(Q,d\right) $-\emph{%
		standard} sequence of Lipschitz cutoff functions $\left\{ \psi _{j}\right\}
	_{j=1}^{\infty }$ at $\left( x,r\right) $, along with sets $B(x,r_{j})\supset \mathrm{supp}(\psi _{j})$, to be a sequence satisfying $%
	\psi _{j}=1$ on $B(x,r_{j+1})$, $r_{1}=r$, ${r_{\infty }\equiv
	\lim_{j\rightarrow \infty }r_{j}=\frac{1}{2}r}$, $r_{j}-r_{j+1}=\frac{c}{%
		j^{\gamma }}r$ for a uniquely determined constant $c$ and $\gamma >1$, and ${
	\left\Vert [\nabla\psi_j]_Q\right\Vert _{\infty }\leq K\frac{%
		j^{\gamma }}{r}}$.
\end{definition}

For $1 \leq p < \infty$, let $\mathcal{W}^{1,p}_Q(\Omega, dx)$ denote the closure of the Lipschitz functions on $\Omega$ under the norm
$$
\norm{u}{\mathcal{W}^{1,p}_Q(\Omega, dx)}:= \norm{u}{L^p(\Omega, dx)} + \norm{[\nabla u]_Q}{L^p(\Omega,dx)}.
$$
We say that  $(\Omega, d, Q)$ admits an \emph{Orlicz-Sobolev inequality} with a (finite) constant $C_S > 0$ if for every $d$-ball $B:=B_d(y,r) \subset X$, with $0 < r < \text{dist}(y, \partial \Omega)/2$, and every function $w \in \mathcal{W}^{1,1}_Q(\Omega, dx)$ with $\text{supp}(w) \subset B$ it holds true that
\begin{align}\label{weakSubellipticSob}
\left\Vert w\right\Vert _{L^{\Phi }\left( \mu_{B}\right) }\leq C_S
r\left( B\right) \left\Vert [\nabla w]_Q\right\Vert _{L^{1}\left(\mu_{B}\right)},\ \ \ \ \ \supp w\subset B,
\end{align}
where $d\mu_B$ now stands for $dx/|B|$, and $|B|$ is the Lebesgue measure of $B$. 
We have the following version of Theorem \ref{thm:Sob=>doublingMetric}

\begin{theorem}\label{thm:Sob=>doublingSubelliptic} Suppose that the structure $(\Omega, d, Q)$ admits accumulating sequences of Lipschitz cut-off functions as well as an Orlicz-Sobolev inequality (\ref{weakSubellipticSob}). 
	Assume also that the function $\Phi$ satisfies
	\begin{equation*}
	\Phi(t)\geq t\left(\ln t\right)^{\alpha},\quad \forall t>1,
	\end{equation*}
	for some $\alpha>\gamma$.
	Then, the Lebesgue measure is doubling on $(\Omega, d)$. More precisely, there exists a constant $C_D \geq 1$, depending only on $\alpha$, $C_S$, and $\gamma$ and $K$ from Definition \ref{def_cutoff} such that
	\begin{equation*}
	|B_d(y,2r)| \leq C_D \, |B_d(y,r)| \quad \forall y \in X, r >0.
	\end{equation*}
\end{theorem}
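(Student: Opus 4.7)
The plan is to follow the proof of Theorem \ref{thm:Sob=>doublingMetric} essentially verbatim, with the only substantive change being that in place of the explicit Lipschitz cutoff functions $\psi_j$ built directly from the metric $d$ in \eqref{defpsijMetric}, we feed into \eqref{weakSubellipticSob} the $(Q,d)$-standard sequence of accumulating Lipschitz cutoff functions provided by Definition \ref{def_cutoff}. Fix a ball $B=B_d(y,r)$ with $0<r<\operatorname{dist}(y,\partial\Omega)/2$, set $B^*=2B$, and let $\{\psi_j\}$ be such a standard sequence at $(y,r)$ with the associated radii $r_j$ and nested balls $B_j=B_d(y,r_j)\supset\operatorname{supp}\psi_j$. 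The essential properties we need are $\psi_j=1$ on $B_{j+1}$, $0\leq\psi_j\leq 1$, $\operatorname{supp}\psi_j\subset B_j\subset B\subset B^*$, and the pointwise bound $[\nabla\psi_j]_Q\leq K\,j^{\gamma}/r$ almost everywhere on $B_j$ with vanishing $Q$-gradient outside.

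Applying \eqref{weakSubellipticSob} to $\psi_j$ on $B^*$ (using $\operatorname{supp}\psi_j\subset B_j$ to reduce the right-hand side integral to $B_j$) yields
\begin{equation*}
\|\psi_j\|_{L^{\Phi}(\mu_{B^*})}\leq C_S\, r(B^*)\,\|[\nabla\psi_j]_Q\|_{L^1(\mu_{B^*})}\leq 2C_S K\, j^{\gamma}\,\frac{|B_j|}{|B^*|}.
\end{equation*}
Setting $\widetilde{C}_S=2C_SK$ and unpacking the Orlicz norm via \eqref{norm_def}, the fact that $\psi_j\equiv 1$ on $B_{j+1}$ gives exactly the subelliptic analogue of \eqref{iter_setup}:
\begin{equation*}
\Phi\!\left(\frac{1}{\widetilde{C}_S\, j^{\gamma}\,|B_j|/|B^*|}\right)\leq \frac{|B^*|}{|B_{j+1}|}.
\end{equation*}
From here the argument proceeds identically to the metric case: introduce $P_j=|B^*|/(\widetilde{C}_S j^{\gamma}|B_j|)$, invoke the hypothesis $\Phi(t)\geq t(\ln t)^{\alpha}$ to obtain the recurrence $P_{j+1}\geq P_j(\ln P_j)^{\alpha}/(\widetilde{C}_S(j+1)^{\gamma})$, and run the induction showing $P_j\geq P_1 e^{j-1}$ whenever $P_1$ exceeds an explicit constant depending on $\alpha,\gamma,\widetilde{C}_S$.

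The critical point where the hypothesis $\alpha>\gamma$ enters is exactly the same as in Theorem \ref{thm:Sob=>doublingMetric}: under this inequality the ratio $(\ln P_1+j-1)^{\alpha}/(e\widetilde{C}_S(j+1)^{\gamma})$ tends to infinity in $j$, so the inductive multiplier eventually dominates and forces $P_j\to\infty$. This contradicts the trivial bound $P_j\leq |B^*|/(\widetilde{C}_S j^{\gamma}|\tfrac12 B|)\to 0$ coming from $B_j\supset\tfrac12 B$, and therefore $P_1$ must be bounded by a constant $C(\alpha,\gamma,C_S,K)$, which is precisely the doubling inequality $|2B|\leq C_D|B|$.

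The main (and essentially only) obstacle I anticipate is a notational one: in the metric version of the theorem the author is free to choose $\gamma$ inside the proof, whereas here $\gamma$ is fixed externally as part of the data of the standard sequence in Definition \ref{def_cutoff}. This is exactly why the hypothesis has been strengthened from $\alpha>1$ to $\alpha>\gamma$, and once this bookkeeping is respected the argument goes through with no further modification. The upper-gradient $g_j=\operatorname{lip}(\psi_j)$ in the metric proof is replaced throughout by $[\nabla\psi_j]_Q$, which is legitimate because \eqref{weakSubellipticSob} is formulated in terms of the $Q$-gradient, and because the standard sequence of Definition \ref{def_cutoff} provides the analogue of the pointwise bound \eqref{condpsij} needed to pass from the Orlicz-Sobolev inequality to the key recurrence.
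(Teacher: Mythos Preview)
Your proposal is correct and follows essentially the same approach as the paper: apply \eqref{weakSubellipticSob} to the accumulating cutoffs $\psi_j$ on $B^*$, derive the analogue of \eqref{Sobappliedtopsij} with $\widetilde{C}_S$ proportional to $C_SK$, and then repeat the induction from Theorem \ref{thm:Sob=>doublingMetric} verbatim. Your observation that $\gamma$ is now fixed externally by Definition \ref{def_cutoff}, forcing the hypothesis $\alpha>\gamma$ rather than $\alpha>1$, is exactly the point.
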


\begin{proof} Given a $d$-ball $B:=B_d(y,r)$, with  $0 < r < \text{dist}(y, \partial \Omega)/2$, just as in the proof of Theorem \ref{thm:Sob=>doublingMetric}, apply the weak-Sobolev inequality \eqref{weakSubellipticSob} to the accumulating sequence of Lipschitz cut-off functions $\{\psi_j\}$ on the $d$-ball $B^*:=B_d(y,2r)$ and, for $j \in \na$, set $B_j:=\mathrm{supp}(\psi_j)$, to obtain
	\begin{equation*}
	\left\Vert \psi_j\right\Vert _{L^{\Phi }\left( \mu_{B^*}\right) }\leq C_Sr\left( B\right) \left\Vert [\nabla\psi_j]_Q\right\Vert _{L^{1}\left(\mu_{B^*}\right)}\\
	\leq C_S\left\Vert [\nabla\psi_j]_Q\right\Vert _{\infty }\left\Vert 1\right\Vert _{L^{1}\left(\mu_{B^*}\right)}\leq C_SKj^{\gamma}\frac{\mu(B_j)}{\mu(B^*)}.
	\end{equation*}
	Note that this is precisely (\ref{Sobappliedtopsij}) with $C_SK$ in place of $C_S/c$. With the notation $\widetilde{C}_S=C_SK$ the rest of the proof repeats verbatim the proof of Theorem \ref{thm:Sob=>doublingMetric}.
	\end{proof}

Finally, we can also restate Theorem \ref{thm:Sob=>superradius} and the proof will follow similarly
\begin{theorem}\label{thm:Sob2=>superradius} Suppose that the structure $(\Omega, d, Q)$ admits accumulating sequences of Lipschitz cut-off functions as well as an Orlicz-Sobolev inequality with superradius $\varphi(r)$
	\begin{align*}
	\left\Vert w\right\Vert _{L^{\Phi }\left( \mu_{B}\right) }\leq C_S
	\varphi(r\left( B\right))\left\Vert [\nabla w]_Q\right\Vert _{L^{1}\left(\mu_{B}\right)},\ \ \ \ \ \supp w\subset B.
	\end{align*}
	Assume also that the function $\Phi$ satisfies
	\begin{equation*}
	\Phi(t)\geq t\left(\ln t\right)^{\alpha},\quad \forall t>1,
	\end{equation*}
	for some $\alpha>\gamma$.
Then for every $\ve>0$ the superradius $\varphi$ satisfies
\begin{equation}\label{superrad}
	\frac{\varphi(r(B))}{r(B)}\geq C_\ve\left(\ln\left[\frac{|2B|}{|B|}\right]\right)^{\alpha-1-\ve},
\end{equation}
	where the constant $C_{\ve}>0$ depends on $\ve$, $\alpha$ and $C_S$.
	In particular, if $\varphi(r)=r$, the Lebesgue measure is doubling on $(\Omega, d)$.
\end{theorem}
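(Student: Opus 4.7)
The plan is to combine the iteration machinery from Theorem \ref{thm:Sob=>superradius} with the $(Q,d)$-standard accumulating Lipschitz cut-off sequence from Definition \ref{def_cutoff}, exactly as Theorem \ref{thm:Sob=>doublingSubelliptic} was deduced from Theorem \ref{thm:Sob=>doublingMetric}. No new ingredients are needed; one just has to track the factor $\varphi(2r)/r$ through the previous argument.

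First I would fix $B = B_d(y, r)$ with $0 < r < \mathrm{dist}(y, \partial \Omega)/2$, set $B^* = B_d(y, 2r)$, and take the $(Q,d)$-standard sequence $\{\psi_j\}$ at $(y, r)$ with supports $B_j \subset B$, $\psi_j \equiv 1$ on $B_{j+1}$, and $\|[\nabla \psi_j]_Q\|_\infty \leq K j^\gamma / r$. Applying the weak Orlicz-Sobolev inequality to $\psi_j$ on $B^*$ and using $\supp \psi_j \subset B_j$ yields
\[
\|\psi_j\|_{L^\Phi(\mu_{B^*})} \leq C_S \varphi(2r) \|[\nabla \psi_j]_Q\|_{L^1(\mu_{B^*})} \leq C_S K \frac{\varphi(2r)}{r} j^\gamma \frac{|B_j|}{|B^*|}.
\]
Setting $\widetilde{C}_S(r) := C_S K \varphi(2r)/r$, this matches the key step in the proof of Theorem \ref{thm:Sob=>superradius}, now with an $r$-dependent constant in place of $\widetilde{C}_S$.

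From here I would rerun that proof verbatim. The bound above combined with $\psi_j \equiv 1$ on $B_{j+1}$ and the definition of the Orlicz norm yields the iterative inequality for $P_j := |B^*|/(\widetilde{C}_S(r) j^\gamma |B_j|)$; the hypothesis $\Phi(t) \geq t(\ln t)^\alpha$ together with Young's inequality at exponents $\alpha/\gamma$ and $\alpha/(\alpha - \gamma)$, legitimate because $\alpha > \gamma$, produces a sufficient condition on $P_1$ for the induction $P_j \geq P_1 e^{j-1}$ to propagate. Were that condition to hold, $P_j \to \infty$ would contradict $P_j \to 0$ (which follows from $|B_j| \geq |\tfrac12 B| > 0$ while $j^\gamma \to \infty$); hence it must fail, giving the necessary bound $\widetilde{C}_S(r) \geq C(\alpha, \gamma)(\ln P_1)^{\alpha - \gamma}$. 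Substituting $P_1 = |2B|/(\widetilde{C}_S(r)|B|)$ and letting $\gamma \searrow 1$ produces (\ref{superrad}); the doubling conclusion when $\varphi(r) = r$ is immediate, since then $\widetilde{C}_S(r)$ is a fixed constant and (\ref{superrad}) forces $|2B|/|B|$ to be uniformly bounded. There is no substantive obstacle; the only subtlety is that the iteration constant now depends on $r$ through $\varphi(2r)/r$, but since $r$ is held fixed throughout the induction in $j$, this dependence is inert.
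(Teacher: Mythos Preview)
Your proposal is correct and is precisely the approach the paper intends: the paper offers no separate proof of Theorem~\ref{thm:Sob2=>superradius} beyond the remark that it ``will follow similarly,'' and your outline---replace the metric cut-offs of Theorem~\ref{thm:Sob=>superradius} by the $(Q,d)$-standard sequence of Definition~\ref{def_cutoff}, obtain the analogue of \eqref{Sobappliedtopsij} with $\widetilde{C}_S(r)=C_SK\,\varphi(2r)/r$, and rerun the induction/Young-inequality argument verbatim---is exactly that. The only point worth flagging is the step ``letting $\gamma\searrow 1$'': in the metric proof $\gamma$ is chosen freely, whereas here $\gamma$ comes from the cut-off hypothesis, so the conclusion with exponent $\alpha-1-\ve$ tacitly assumes the structure admits accumulating sequences for every $\gamma>1$ (otherwise one would only get exponent $\alpha-\gamma$); this is the reading consistent with the paper's stated conclusion.
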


 \subsection{Example}
The following example suggests that the estimate (\ref{superrad}) might not be sharp. More precisely, there exists a matrix $Q$, metric $d$, and a subset $\Omega\subseteq \R^n$ such that 
\begin{equation}\label{OS}
\left\Vert w\right\Vert _{L^{\Phi }\left( \mu_{B}\right) }\leq C
\varphi(r\left( B\right))\left\Vert [\nabla w]_Q\right\Vert _{L^{1}\left(\mu_{B}\right)},
\end{equation}
for all $w \in \mathcal{W}^{1,1}_Q(\Omega, dx)$ with $\text{supp}(w) \subset B$, where
\begin{equation}\label{Phi}
\Phi(t)=t(\ln t)^{\alpha},\  \alpha>1, \quad \forall t>1
\end{equation}
and $\varphi(r)$ satisfies
\begin{equation}\label{superrad_upper}
	\frac{\varphi(r(B))}{r(B)}\approx\left(\ln\left[\frac{|2B|}{|B|}\right]\right)^{\alpha}.
\end{equation}
Therefore, we expect the same lower bound on the superradius might be necessary, i.e. it should be possible to improve (\ref{superrad}) to 
\[
\frac{\varphi(r(B))}{r(B)}\geq C\left(\ln\left[\frac{|2B|}{|B|}\right]\right)^{\alpha}.
\]
Estimate (\ref{superrad_upper}) is a consequence of \cite[Proposition 80]{KRSSh}. More precisely, let $n=2$, $Q(x,y)=diag\{1,f^{2}(x)\}$ where $f(x)=exp(-1/|x|^{\sigma})$, $0<\sigma<1$, and let $d$ be the metric subunit to $Q$, see \cite{SW1, KRSSh} for the definition. Then the function $F(x):=-\ln f(x)=1/|x|^{\sigma}$ satisfies the condition of \cite[Proposition 80]{KRSSh} provided $\sigma \alpha<1$. Proposition 80 then says that Orlicz-Sobolev inequality (\ref{OS}) with $\Phi$ satisfying (\ref{Phi}) holds in the ball $B=B(0,r)$ with 
$$
\varphi(r)=C|F'(r)|^{\alpha}r^{\alpha+1}
$$
provided $\lim_{r\to 0}\varphi(r)=0$. Now, if $F(x)=1/|x|^{\sigma}$ and $\sigma \alpha<1$ we have
$$
\varphi(r)=C|F'(r)|^{\alpha}r^{\alpha+1}=C\frac{r^{\alpha+1}}{r^{\alpha(\sigma+1)}}=C\frac{r}{r^{\alpha\sigma}}\to 0,\quad\text{as}\  r\to 0.
$$
We therefore only need to check estimate (\ref{superrad_upper}). Using the estimate from \cite[Conclusion 45]{KRSSh} we have
$$
|B(0,r)|\approx \frac{f(r)}{|F'(r)|^{2}}\approx r^{2(\sigma+1)}e^{-\frac{1}{r^{\sigma}}},
$$
and therefore 
$$
\left(\ln\left[\frac{|2B|}{|B|}\right]\right)^{\alpha}\approx \frac{1}{r^{\sigma \alpha}}\approx \frac{\varphi(r)}{r}.
$$

\end{document}